\title{\sc\vskip-1.0em Quotients of Fourier algebras, and representations which are not completely bounded}
\author{Y. Choi and E. Samei}
\date{August 7, 2012}
\newcounter{pulse}
\numberwithin{pulse}{section}
\newcounter{thnum}
\newcounter{qnum}
\newtheorem{thm}[thnum]{Theorem}
\newtheorem*{fact}{Fact}
\newtheorem{lem}[pulse]{Lemma}
\newtheorem{cor}[pulse]{Corollary}
\newtheorem*{qustar}{Question}
\newtheorem{qu}[qnum]{Question}
\theoremstyle{definition}
\newtheorem{dfn}[pulse]{Definition}
\newtheorem*{remstar}{Remark}
\newcommand{\FA}{\operatorname{A}}
\newcommand{\Cst}{{\rm C}^*}
\newcommand{\VN}{\operatorname{VN}}
\newcommand{\SIN}{{\rm SIN}}
\newcommand{\Bdd}{{\mathcal B}}
\newcommand{\Cpct}{{\mathcal K}}
\newcommand{\cH}{{\mathcal H}}
\newcommand{\Cplx}{{\mathbb C}}
\newcommand{\bbF}{{\mathbb F}}
\newcommand{\fA}{{\mathfrak A}}
\newcommand{\dt}[1]{\textcolor{Maroon}{\sf#1}}
\newcommand{\norm}[1]{\left\Vert#1\right\Vert}
\newcommand{\abs}[1]{\left\vert#1\right\vert}
\newcommand{\starhm}{$*$-homo\-morphism}
\newcommand{\starrep}{$*$-repres\-entation}
\newcommand{\oss}{o.s.~structure}
\newcommand{\MAX}[1]{\operatorname{\sf max}#1}
\newcommand{\MIN}[1]{\operatorname{\sf min}#1}
\newcommand{\cbnorm}[1]{\norm{#1}_{\rm cb}}
\newcommand{\wstar}{\ensuremath{{\rm w}^*}}
\newcommand{\tili}{\widetilde{\imath}}
\newcommand{\lm}{\lambda}
\newcommand{\gumpf}{\bigskip\noindent{2010 MSC: 43A30 (primary); 46L07 (secondary)}}
\begin{document}
\maketitle

\begin{abstract}
We observe that for a large class of non-amenable groups $G$, one can find bounded representations of $\FA(G)$ on Hilbert space which are not completely bounded. We also consider restriction algebras obtained from $\FA(G)$, equipped with the natural operator space structure, and ask whether such algebras can be completely isomorphic to operator algebras; partial results are obtained, using a modified notion of Helson set which takes account of operator space structure. In particular, we show that when $G$ is virtually abelian and $E$ is a closed subset, the restriction algebra $\FA_G(E)$ is completely isomorphic to an operator algebra if and only if $E$ is finite.

\gumpf
\end{abstract}

\begin{section}{Representations of the Fourier algebra}\label{s:non-cb_rep}
The Fourier algebra $\FA(G)$ of a locally compact group $G$ was introduced by P.~Eymard~\cite{Eym_BSMF64}. As a Banach space, $\FA(G)$ can be identified with the unique isometric predual of the group von Neumann algebra of $G$; it therefore inherits a canonical operator space structure.
Equipped with this operator space structure, $\FA(G)$ can be regarded as a kind of ``dual object'' of the group algebra $L^1(G)$; in particular, when $G$ is abelian then $\FA(G)$ is canonically isomorphic (both as a Banach algebra and as an operator space) to $L^1(\widehat{G})$.

Motivated by classical results concerning bounded representations of $L^1(G)$ on Hilbert space, several authors have considered the following question:

\begin{qustar}
Let $G$ be a locally compact group, let $\cH$ be a Hilbert space, and let $\phi: \FA(G) \to \Bdd(\cH)$ be a completely bounded homomorphism. Is $\phi$ similar to a \starrep? That is, does there exist an invertible operator $T\in\Bdd(\cH)$ such that $T\phi(f)T^{-1}$ is self-adjoint for every real-valued $f\in\FA(G)$?
\end{qustar}

By recent work of M. Brannan and the second author \cite{BranSam_corep}, the answer is positive if $G$ is a SIN group; in particular, if it is either abelian, compact, or discrete. It is natural to ask if (for such $G$) one really requires complete boundedness of the homomorphism $\phi$; might it be the case that every bounded homomorphism from $\FA(G)$ in to $\Bdd(\cH)$ is completely bounded?
The starting point of our article is the following result, which answers this question in the negative.

\begin{thm}\label{t:non-cb}
Let $G$ be a locally compact group which contains a closed, discrete, nonabelian free subgroup~$F$. Then there exists a Hilbert space $\cH$ and a bounded homomorphism $\phi: \FA(G) \to \Bdd(\cH)$ that is not completely bounded. In particular, $\phi$ is not similar to a \starrep.
\end{thm}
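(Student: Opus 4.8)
The plan is to reduce to a free group by Herz restriction, and then to build the homomorphism from a point derivation, using the fact that the free generators form a Leinert set.

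First I would invoke Herz's restriction theorem: since $F$ is a closed subgroup of $G$, the restriction map $R\colon \FA(G)\to\FA(F)$ is a surjective complete quotient map. Consequently, if $\psi\colon\FA(F)\to\Bdd(\cH)$ is any bounded homomorphism then $\phi=\psi\circ R$ is a bounded homomorphism of $\FA(G)$ with $\norm{\phi}=\norm{\psi}$ and $\cbnorm{\phi}=\cbnorm{\psi}$ (because a complete metric surjection carries both the norm and the matricial norms of $\psi$ exactly over to $\psi\circ R$). Thus it suffices to produce a bounded, non-completely-bounded homomorphism on $\FA(F)$. Since $F$ is nonabelian free it contains a free subgroup of countably infinite rank, which is again closed because $F$ is discrete, so a further application of Herz restriction lets me assume $F=\bbF_\infty=\langle g_1,g_2,\dots\rangle$.

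The key device is to encode a homomorphism as a $2\times2$ triangular matrix $\phi(f)=\left(\begin{smallmatrix}\epsilon_e(f)&T(f)\\0&\rho(f)\end{smallmatrix}\right)$, where $\epsilon_e$ is evaluation at the identity, $\rho$ is a \starrep, and $T\colon\FA(\bbF_\infty)\to\Bdd(\cH_\rho,\Cplx)$ is a derivation for the module action determined by $\epsilon_e$ and $\rho$. A direct computation shows that $\phi$ is multiplicative precisely when $T$ obeys the derivation identity, that $\norm\phi$ is comparable to $\norm T$, and that $\cbnorm\phi$ is comparable to $\cbnorm T$ (the diagonal pieces $\epsilon_e$ and $\rho$ being completely contractive). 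Hence the theorem reduces to exhibiting a bounded derivation $T$ into a row-type module which is not completely bounded. To build $T$ I would exploit the Leinert property of $E=\{g_1,g_2,\dots\}$: there is a constant with $\big(\sum_i\abs{f(g_i)}^2\big)^{1/2}\le C\norm{f}_{\FA}$ for all $f$, and dually the operators $\lambda_{g_i}\in\VN(\bbF_\infty)$ span a space completely isomorphic to the intersection $R\cap C$ of the row and column Hilbert operator spaces, so that the restriction algebra $\FA(E)$ carries the operator space structure $R+C$. Taking for $T$ the gradient that assembles a family of point derivations at $e$ into a single column, the Leinert estimate makes $T$ bounded as a map of underlying Banach spaces (its values lie in $\ell^2$), whereas its complete boundedness would force the identity map $R\to C$ to be completely bounded; the latter fails, since on $n\times n$ matrices that map has completely bounded norm of order $\sqrt n$.

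The main obstacle is the honest construction of $T$: one must produce genuinely multiplicative data, i.e.\ enough nonzero continuous point derivations of $\FA(\bbF_\infty)$ at $e$ whose combined gradient realizes, up to complete isomorphism, the identity of the $R+C$ structure, and then arrange the module action on $\cH_\rho$ so that complete boundedness of $\phi$ is genuinely equivalent to that of $T$. Once the row-versus-column asymmetry is correctly installed, boundedness of $\phi$ follows from the scalar $\ell^2$ Leinert estimate, and the failure of complete boundedness follows from the standard $\sqrt n$ blow-up for the row-to-column flip, so the final step is simply to record explicit matrices over $\FA(E)$ on which $\cbnorm{\phi^{(n)}}$ diverges while $\norm{\phi}$ stays finite. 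The closing clause of the statement is then automatic: any homomorphism similar to a \starrep{} is completely bounded, so a non-completely-bounded $\phi$ cannot be similar to one.
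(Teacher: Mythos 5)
Your opening reduction (Herz restriction, passing to a copy of $\bbF_\infty$ inside $F$) and your closing clause (similar to a \starrep\ implies completely bounded) both agree with the paper and are fine. The problem is the engine in the middle. Your construction is driven by ``a family of nonzero continuous point derivations of $\FA(\bbF_\infty)$ at $e$,'' and no such objects exist: for a \emph{discrete} group every $\delta_x$ lies in the Fourier algebra and is an idempotent, so any bounded point derivation $D$ at $e$ satisfies $D(\delta_x)=2\delta_x(e)D(\delta_x)=0$ for $x\neq e$ and $D(\delta_e)=2D(\delta_e)=0$; hence $D$ vanishes on the dense subalgebra $c_{00}(\bbF_\infty)$ and is identically zero. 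The $2\times2$ triangular device therefore has nothing to feed it, and the step you flag as ``the main obstacle'' is not an obstacle to be overcome but an impossibility. Note also that the map the Leinert inequality actually controls, $f\mapsto (f(g_i))_i$, does not satisfy the derivation identity $T(fg)=f(e)T(g)+T(f)g(e)$ (the left side is $f(g_i)g(g_i)$, not $f(e)g(g_i)+f(g_i)g(e)$); it is a surjective \emph{algebra homomorphism} onto $\ell^2$ with pointwise multiplication. You have conflated a quotient homomorphism with a gradient of point derivations.

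Once this is recognized, the genuinely missing ingredient becomes visible: having mapped $\FA(\bbF_\infty)$ onto $(\ell^2,\cdot)$ via the Leinert set, one still has to realize $(\ell^2,\cdot)$ as a closed subalgebra of $\Bdd(\cH)$, and this is a nontrivial theorem (Davie's result that $\ell_p$, $1\leq p\leq 2$, is a Q-algebra; or, in operator-space form, the Blecher--Le Merdy realization of $\MAX{\ell^2}$ as an operator algebra). That is precisely the input the paper quotes and your outline never supplies. The paper then rules out complete boundedness of the composite not by a row-versus-column flip inside a triangular matrix, but by citing \cite[Theorem 20]{BranSam_corep}: a completely bounded homomorphism of $\FA(F)$, for $F$ discrete, is dominated by the supremum norm, which is incompatible with being bounded below in the $\ell^2$ norm on an infinite index set. (The appendix's variant does use the non-complete-boundedness of $R+C\to\MAX{\ell^2}$, which is the correct home for the $\sqrt{n}$ blow-up you invoke, but there too the multiplicative structure comes from pointwise multiplication on $\ell^2$, not from derivations.) As written, your argument has no valid multiplicative map to work with and omits the one substantive external theorem the proof needs.
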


Theorem~\ref{t:non-cb} follows by combining the results of \cite{BranSam_corep} with some known facts from the 1970s, and some basic facts about operator spaces and completely bounded maps (as may be found in~\cite{ER_OSbook}).
 The first main ingredient is due to M. Leinert \cite{Leinert_set}.

\begin{fact}[{\cite[equation (2.2)]{Leinert_set}}]
Let $F$ be a nonabelian free group. There exists an infinite set $E\subset F$ and a constant $C>0$ such that
\begin{equation}\label{eq:lein-ineq}
\norm{\chi_E f}_2 \leq C\norm{f}_{\FA(F)} \quad\text{for all $f\in \FA(F)$}
\end{equation}
\end{fact}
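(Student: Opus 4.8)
The plan is to pass to the dual formulation and then prove an operator-norm estimate by a moment computation. Since $\FA(F)$ is the (unique) predual of the group von Neumann algebra $\VN(F)$, with duality determined by $\langle \lambda(y), f\rangle = f(y)$ for $y\in F$ and $f\in\FA(F)$, inequality~\eqref{eq:lein-ineq} is implied by the following dual statement: there is a constant $C$ such that for every finitely supported family $(c_x)_{x\in E}$ in $\Cplx$,
\begin{equation*}
\norm{\sum_{x\in E} c_x\lambda(x)}_{\VN(F)} \le C\Big(\sum_{x\in E}|c_x|^2\Big)^{1/2}.
\end{equation*}
Indeed, pairing such an operator against $f$ gives $\sum_{x\in E} c_x f(x)$, and since $\norm{\chi_E f}_2$ is exactly the supremum of $\big|\sum_x c_x f(x)\big|$ over all $c$ with $\big(\sum_x|c_x|^2\big)^{1/2}\le 1$, the operator bound yields $\norm{\chi_E f}_2 \le C\norm{f}_{\FA(F)}$. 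So the entire content is the dual operator estimate, and only one direction of the duality is needed.

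For the set $E$ I would take a free generating set $\{g_1, g_2, \dots\}$ of an infinite-rank free subgroup of $F$ (such a subgroup exists inside any nonabelian free group). This $E$ is infinite and, crucially, satisfies the Leinert condition: any alternating product $g_{i_1}^{\pm 1} g_{i_2}^{\pm 1}\cdots$ built from these generators is already a reduced word in the free group on the $g_j$, so it can equal $e$ only through genuine letter-by-letter cancellation. To prove the operator bound, set $T = \sum_{x\in E} c_x\lambda(x)$ and use the faithful normal trace $\tau$ on $\VN(F)$, for which $\norm{T}^2 = \norm{T^*T} = \lim_n \tau\big((T^*T)^n\big)^{1/n}$. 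Expanding the product,
\begin{equation*}
\tau\big((T^*T)^n\big) = \sum \overline{c_{a_1}}\,c_{b_1}\cdots\overline{c_{a_n}}\,c_{b_n},
\end{equation*}
where the sum runs over all $a_i, b_i\in E$ satisfying $a_1^{-1}b_1 a_2^{-1}b_2\cdots a_n^{-1}b_n = e$.

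The freeness of the $g_j$ forces every solution of this constraint to arise from a non-crossing pairing of the $2n$ positions in which matched letters are mutually inverse; the alternating sign pattern then pairs each odd position (an $a_i^{-1}$) with an even position (a $b_j$) and forces $a_i = b_j$. Summing $|c|$ over the shared index contributes a factor $\sum_x|c_x|^2$ per pair, and these factors are independent across pairs, so each pairing contributes at most $\big(\sum_x|c_x|^2\big)^n$. As the number of non-crossing pairings of $2n$ points is the Catalan number $\le 4^n$, we obtain $\tau\big((T^*T)^n\big) \le 4^n\big(\sum_x|c_x|^2\big)^n$; letting $n\to\infty$ gives $\norm{T}\le 2\big(\sum_x|c_x|^2\big)^{1/2}$, so the inequality holds, in fact with $C=2$.

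The main obstacle is the combinatorial heart of the last step: verifying that the contributing terms are governed by non-crossing pairings (so that the count grows like $4^n$ rather than $(2n)!$) and that the per-pairing sum factorizes cleanly as a product of $\sum_x|c_x|^2$. This is precisely where the freeness of $F$ is used, and it is what separates this estimate from the corresponding \emph{false} statement for, say, the powers of a single element, where the analogous products collapse far more often. Everything else — the duality reduction and the spectral-radius-via-moments identity for $\tau$ — is standard, so I expect the real work to lie in making the pairing structure and the resulting uniform count rigorous.
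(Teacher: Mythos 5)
Your argument is correct, but note that the paper does not actually prove this statement: it is quoted as a known Fact from Leinert's 1974 paper (with the sharp constant $C=2$ attributed to Bo\.zejko), so there is no internal proof to compare against. What you supply is a legitimate self-contained proof by the moment method. The duality step is right: since $\norm{\chi_E f}_2=\sup\{|\sum_{x\in E}c_xf(x)|\}$ over finitely supported $c$ with $\norm{c}_2\le 1$, and $\sum_x c_xf(x)=\langle\sum_x c_x\lambda(x),f\rangle$, the desired inequality follows from the operator bound $\norm{\sum_{x\in E}c_x\lambda(x)}\le C(\sum_x|c_x|^2)^{1/2}$. Your choice of $E$ (a free basis of a copy of $\bbF_\infty$, which exists in any nonabelian free group) is infinite and does satisfy Leinert's condition. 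The combinatorial core is also sound: a length-$2n$ word in letters $g_k^{\pm1}$ is trivial in the free group iff it admits a non-crossing perfect matching into mutually inverse pairs (standard induction on an adjacent cancelling pair); since odd positions carry inverse generators and even positions carry generators, every pair joins some $a_i^{-1}$ to some $b_j$ and forces $a_i=b_j$; and summing over at least one compatible matching per solution gives $\tau((T^*T)^n)\le \mathrm{Cat}_n(\sum_x|c_x|^2)^n\le 4^n(\sum_x|c_x|^2)^n$, the overcounting being harmless for an upper bound. Faithfulness of the trace justifies $\norm{T^*T}=\lim_n\tau((T^*T)^n)^{1/n}$, so you even recover the sharp constant $2$. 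The only items to write out in full are the cancellation lemma just mentioned and the remark about overcounting; otherwise this is the now-standard free-probability proof (a sum of free Haar unitaries is dominated by a circular element), which differs in flavour from Leinert's original combinatorial argument but is entirely adequate for the Fact as stated.
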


(Although we do not need estimates of the optimal constant $C$, we note that it was shown in \cite{Boz_PAMS75} that one can take $C=2$ and that this is sharp.)

\begin{remstar}
A few words may be in order concerning terminology. Leinert originally proved that sets $E$ which satisfied a certain combinatorial condition have the property \eqref{eq:lein-ineq}. In \cite{Boz_PAMS75}, this condition is referred to as \dt{Leinert's condition}; and sets inside locally compact groups that satisfy Leinert's condition are now known as \dt{Leinert sets}. (The term ``Leinert set'' has also been used for any set $E$ for which an inequality such as \eqref{eq:lein-ineq} holds.)
\end{remstar}

The second main ingredient we need is of similar vintage, and is originally due to A. M. Davie (a more general version was obtained independently
in~\cite{Var72_Q-alg}).

\begin{fact}[{\cite[Theorems 1.1 and 4.2]{Davie_Q-alg}}]
Let $1\leq p \leq 2$, and consider $\ell_p$ as a Banach algebra with pointwise product (and the usual norm). Then $\ell_p$ is isomorphic as a Banach algebra to a closed subalgebra of $\Bdd(\cH)$, for some Hilbert space~$\cH$.
\end{fact}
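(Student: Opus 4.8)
The plan is to realise $\ell_p$ as a \emph{Q-algebra}, that is, as a quotient of a uniform algebra, and then to invoke the classical principle that every Q-algebra is isomorphic to a closed subalgebra of some $\Bdd(\cH)$. This latter principle (which is classical, and is part of what Davie's Theorem~1.1 supplies) is what does the ``operator algebra'' work for us: if $U \subseteq C(X)$ is a uniform algebra and $I \subseteq U$ is a closed ideal, then $U/I$ admits a bounded isomorphism onto a closed subalgebra of some $\Bdd(\cH)$. Morally this holds because $C(X)$ acts on $L^2$ of a suitable measure by commuting normal operators, so polynomials in elements of $U$ are governed by the polydisc sup-norm via von Neumann's inequality, and passing to the quotient is then handled by the (non-trivial) fact that a quotient of an operator algebra by a closed ideal is again an operator algebra. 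Granting this, the whole theorem reduces to the claim that for $1 \le p \le 2$ the Banach algebra $\ell_p$ (with pointwise product) is a Q-algebra.

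By Davie's abstract characterisation of Q-algebras, this claim is equivalent to a single uniform polynomial inequality. Writing $D$ for the open unit disc and $\norm{P}_\infty = \sup\{\abs{P(z)} : z \in \overline{D}^{\,n}\}$ for the sup-norm of a polynomial $P$ on the closed polydisc, it suffices to produce a constant $K$, independent of $n$ and of $P$, such that
\begin{equation*}
\norm{P(a_1,\dots,a_n)}_p \le K \norm{P}_\infty
\end{equation*}
for every $n$, every polynomial $P$ in $n$ variables with $P(0)=0$, and all $a_1,\dots,a_n$ in the closed unit ball of $\ell_p$. Here $P(a_1,\dots,a_n) \in \ell_p$ is formed by coordinatewise substitution, so that its $j$-th entry is $P(a_1(j),\dots,a_n(j))$; membership in $\ell_p$ is elementary since $P$ has no constant term and each $\abs{a_k(j)} \le 1$. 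Abbreviating $w^{(j)} = (a_1(j),\dots,a_n(j)) \in \overline{D}^{\,n}$, and noting that the constraints $\norm{a_k}_p \le 1$ read $\sum_j \abs{w^{(j)}_k}^p \le 1$ for each $k$, the inequality to be proved becomes
\begin{equation*}
\sum_j \abs{P(w^{(j)})}^p \le K^p\, \norm{P}_\infty^p .
\end{equation*}

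This last inequality is the heart of the matter, and establishing it uniformly in $n$ is the step I expect to be the main obstacle. A purely pointwise estimate is hopeless: bounding each $\abs{P(w^{(j)})}$ by $\norm{P}_\infty$ and summing introduces a factor growing with the number of nonzero terms, and even the sharper Schwarz-lemma bound $\abs{P(w)} \le \norm{P}_\infty \max_k\abs{w_k}$ only yields $\sum_j\abs{P(w^{(j)})}^p \le n\,\norm{P}_\infty^p$, which still costs a factor of $n$. One must instead exploit the analyticity of $P$ directly---controlling the Taylor coefficients of $P$ by $\norm{P}_\infty$ through Cauchy's estimates, and then summing over $j$ by means of a Khintchine/Hausdorff--Young-type inequality. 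It is precisely in this summation that the restriction $1 \le p \le 2$ is essential (the conjugate exponent being $\ge 2$), and indeed the statement is known to fail for $p>2$, where $\ell_p$ is not a Q-algebra. I would first settle the extreme cases $p=1$ and $p=2$, where the relevant absolute-summability and square-function estimates are cleanest, and then treat intermediate $p$ by interpolation or a direct argument.

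Finally, assembling the pieces: the displayed inequality shows that $\ell_p$ satisfies Davie's criterion, hence is a Q-algebra, and hence---by the principle recalled above---admits a bounded homomorphism $\pi$ into some $\Bdd(\cH)$ which is moreover bounded below. A bounded, bounded-below homomorphism is an isomorphism onto its range, and that range is automatically a closed subalgebra of $\Bdd(\cH)$. Note that no unit is required anywhere, since we only ever substitute into polynomials vanishing at $0$; this matches the fact that $\ell_p$ has no identity, and completes the proof.
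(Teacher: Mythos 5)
Your reduction is the right reconstruction of the cited source: the equivalence of ``$\ell_p$ is (isomorphic to) a Q-algebra'' with the uniform polydisc inequality is precisely Davie's Theorem 1.1, and the passage from a Q-algebra to a closed subalgebra of $\Bdd(\cH)$ is Cole's theorem, which the paper itself invokes in Section~\ref{s:explicit}. The genuine gap is that you never prove the inequality $\sum_j \abs{P(w^{(j)})}^p \leq K^p \norm{P}_\infty^p$, which you correctly identify as the heart of the matter: that inequality \emph{is} the content of Davie's Theorem 4.2, i.e.\ essentially the whole statement beyond classical reductions, and your proposed strategy is only a heuristic that does not obviously close. Cauchy estimates control individual Taylor coefficients by $\norm{P}_\infty$, but summing coefficientwise loses factors growing with the number of monomials (of order $n^d$ in degree $d$), with no bound on $d$ or $n$ available; and the suggested interpolation between $p=1$ and $p=2$ has no standard formulation here, since the map $P\mapsto \bigl(P(w^{(j)})\bigr)_j$ is not linear in the data being varied and the admissible families $(w^{(j)})$ --- determined by the constraint $\norm{a_k}_p\leq 1$ --- themselves change with $p$. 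Davie's actual proof of Theorem 4.2 is a bespoke hard-analysis estimate, not a routine combination of Cauchy bounds with Hausdorff--Young; so as written, your argument establishes the soft framing but none of the analytic content.

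For comparison: the paper does not prove this Fact at all --- it is quoted with the citation to Davie, and the surrounding discussion in Section~\ref{s:explicit} makes explicit that the Q-algebra route (Davie plus Cole) is being taken off the shelf. More importantly, for the only case the paper actually uses ($p=2$, applied to $\ell_2(E)$ in the proof of Theorem~\ref{t:non-cb}), Section~\ref{s:explicit} records a completely elementary and explicit embedding: the rank-one idempotents $u_i$ on $\ell_2(E_0)$ satisfy $\norm{a}_2 \leq \norm{\sum_i a_i u_i} \leq \sqrt{2}\,\norm{a}_2$, giving $\ell_2$ as a closed subalgebra of the compact operators in a few lines, with no uniform algebras anywhere. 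If your aim were a self-contained proof adequate for the paper's application, that construction is the efficient route; the full range $1\leq p\leq 2$ genuinely requires Davie's (or Varopoulos's) theorem, and your write-up, as it stands, defers exactly that step.
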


\begin{proof}[Proof of Theorem~\ref{t:non-cb}]
Let $F$ be as in the statement of the theorem. The main idea is that by the results of \cite{BranSam_corep}, every completely bounded homomorphism from $\FA(F)$ to $\Bdd(\cH)$ extends to a bounded homomorphism $c_0(F)\to\Bdd(\cH)$. We shall now construct a bounded homomorphism $\FA(F)\to\Bdd(\cH)$ that does not extend in this fashion.

For any subset $E\subseteq F$, we have $\ell_2(E)\subseteq \ell_2(F)\subseteq \FA(F)$ (since $F$ is a discrete group). Using Leinert's result, there exists an infinite subset $E$ and a constant $C>0$ for which the inequality \eqref{eq:lein-ineq} holds. Hence the map $c_{00}(F)\to \ell_2(E)$ defined by $f\mapsto \chi_E f$ extends to a surjective homomorphism of Banach algebras $q_E: \FA(F)\to\ell_2(E)$, with $\norm{q_E}\leq C$.

By Davie's result there is a Hilbert space $H$, a constant $c>0$, and a bounded homomorphism of Banach algebras $\theta: \ell_2(E) \to \Bdd(\cH)$ which satisfies
\[ \norm{\theta(a)}\geq c \norm{a}_2 \qquad\text{for all $a\in \ell_2(E)$.} \]
So $\theta q_E:\FA(F)\to\Bdd(\cH)$ is a bounded homomorphism of Banach algebras. Suppose it is completely bounded:
then by \cite[Theorem 20]{BranSam_corep} there exists $K>0$ such that
\[ \norm{\theta q_E(f)} \leq K\norm{f}_\infty \quad\text{for all $f\in \FA(F)$.} \]
In particular, $c\norm{a}_2 < K\norm{a}_\infty$ for all $a\in \ell_2(E)$; but this is absurd, since $E$ is an infinite set. Hence $\theta q_E$ is not completely bounded.

Now, let $q_F: \FA(G)\to\FA(F)$ be the contractive algebra homomorphism defined by $q_F(h)=h\vert_F$\/. Suppose the Banach algebra homomorphism
\[ \phi=\theta q_E q_F : \FA(G) \to \Bdd(\cH) \]
is completely bounded. By a well-known result of Herz, $q_F$ is surjective and the pre-adjoint of an injective \starhm\ (see \cite[Theorem~1]{Herz_AIF73}, or \cite[Theorem 3]{TakTat2}). Therefore $q_F$ is a complete quotient map, which implies that $\theta q_E$ is completely bounded, contrary to what was proved above.

We have shown $\phi$ is not completely bounded. It only remains to note that if $\phi$ were similar to a \starrep, then it \emph{would} be completely bounded; this follows from the observation that any \starrep\ $\FA(G)\to\Bdd(\cH)$ extends continuously to a \starhm\ $C_0(G)\to\Bdd(\cH)$, and hence is automatically completely bounded. (For details, see the proof of \cite[Theorem 8]{BranSam_corep}.)

\end{proof}

\begin{remstar}
It is possible to prove Theorem~\ref{t:non-cb} without using the results of \cite{BranSam_corep}, if one uses some more advanced results on various Hilbertian operator spaces.
This provides an alternative point of view which may be of interest to some readers; details are provided in a short appendix to the present paper.
\end{remstar}

\end{section}

\begin{section}{Restriction algebras as operator algebras?}
\label{s:cb-Helson}
By an \dt{operator algebra}, we mean a norm-closed subalgebra of $\Bdd(\cH)$ for some Hilbert space~$\cH$.
The previous section hinged on the fact that certain restriction algebras arising from the Fourier algebra are isomorphic as Banach algebras to operator algebras. It is then natural to ask what happens when operator space structures (henceforth abbreviated to {\oss}s) are taken into account; can we obtain examples where restriction algebras and operator algebras are not just isomorphic, but are completely boundedly isomorphic?

To be precise: if $E$ is a closed subset of a locally compact group $G$, we write $I_E$ for the ideal of functions in $\FA(G)$ that vanish on $E$, and define $\FA_G(E)$ to be the quotient algebra $\FA(G)/I_E$, equipped with the quotient norm and the quotient \oss; this may be thought of the algebra obtained from $\FA(G)$ by ``restriction to~$E$.''
Our question is then the following.

\begin{qustar}
Are there infinite sets $E\subseteq G$ for which $\FA_G(E)$ is completely boundedly isomorphic to an operator algebra?
\end{qustar}

We shall present some partial results which suggest that this question has a negative answer. 
First, let us fix some terminology and notation; details and definitions can be found in the standard references \cite{ER_OSbook,Pis_OSbook}.

The minimal and maximal {\oss}s on a given Banach space $V$ will be denoted by $\MIN{V}$ and $\MAX{V}$ respectively. An operator space $X$ is said to be \dt{minimal} if $X=\MIN{X}$, or \dt{maximal} if $X=\MAX(X)$.
 We shall use the abbreviations ``c.b.'' for ``completely bounded'', ``cb-isomorphic'' for ``completely boundedly isomorphic'', and so on.
If $X$ and $Y$ are operator spaces, by a \dt{cb-isomorphism from $X$ onto $Y$}, we mean a linear bijection $X\to Y$ which is completely bounded and whose inverse is also completely bounded.

$\FA(G)$ will always be equipped with the canonical \oss\ it inherits as the predual of $\VN(G)\subseteq \Bdd(L^2(G))$; and whenever we regard $\FA_G(E)$ as an operator space, it will always be with respect to the \oss\ induced by the quotient map $\FA(G)\to\FA_G(E)$.

\begin{remstar}
Suppose $E$ is contained in a closed subgroup of $G$, say $H$, so that we have two natural quotient maps to consider, namely $q_G:\FA(G) \to \FA_G(E)$ and $q_H: \FA(H) \to \FA_H(E)$. Consider the commutative diagram
\[ \begin{CD}
\FA(G) & @>{q_1}>> & \FA(H) \\
 @V{q_G}VV  & & @VV{q_H}V \\
\FA_G(E) & @>>> & \FA_H(E)
\end{CD} \]
where $q_1$ is the natural restriction homomorphism.
As remarked in the proof of Theorem~\ref{t:non-cb}), $q_1$ is a \emph{complete} quotient map; and $q_G$ and $q_H$ are complete quotient maps by definition of the {\oss}s on $\FA_G(G)$ and $\FA_H(E)$. Hence the bottom horizontal arrow is also a complete quotient map, and a little thought shows it is injective. Thus the natural homomorphism $\FA_G(E)\to \FA_H(E)$ is a completely isometric isomorphism.
\end{remstar}

It will also be convenient to introduce some new terminology. First, note that since $\FA(G)$ is a regular algebra of continuous functions on $G$, closed under conjugation, the canonical inclusion map $\imath_E:\FA_G(E)\to C_0(E)$ has dense range (by the Stone-Weierstrass theorem). Moreover, this map is injective, and is completely contractive when $C_0(E)$ is equipped with its minimal \oss.

Recall that a closed subset $E\subseteq G$ is said to be a \dt{Helson set} (in~$G$) if $\imath_E: \FA_G(E)\to C_0(E)$ is surjective, with $\norm{\imath_E^{-1}}$ being the so-called \dt{Helson constant} of~$E$.
 
\begin{dfn}
Let $E\subseteq G$ be a closed subset. We say that $E$ is a \dt{cb-Helson set} if $\imath_E$ is surjective and $\imath_E^{-1}: \MIN{C_0(E)} \to \FA_G(E)$ is completely bounded. The c.b.~norm of $\imath_E^{-1}$ is called the \dt{cb-Helson constant} of~$E$.
\end{dfn}

\begin{lem}\label{l:hereditary}
Left translates of cb-Helson sets are cb-Helson, with the same constant; and closed subsets of cb-Helson sets are cb-Helson, with constant at most that of the parent set.
\end{lem}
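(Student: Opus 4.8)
The plan is to recast both assertions as statements about the functoriality of the assignment $E\mapsto\FA(E)$, using two structural facts: that left translation acts completely isometrically on $\FA(G)$, and that a quotient of an operator-space quotient is again the expected quotient.

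For the translation part, fix $g\in G$ and let $L_g\colon\FA(G)\to\FA(G)$ be the left translation $L_gf=f(g^{-1}\,\cdot\,)$. This is a completely isometric automorphism: its adjoint $L_g^*\colon\VN(G)\to\VN(G)$ is left multiplication by a unitary element of $\VN(G)$, which is trivially a complete isometry. A one-line check gives $L_g(I_E)=I_{gE}$, so $L_g$ drops to a complete isometry $\FA(E)\to\FA(gE)$; meanwhile the same formula defines an isometric isomorphism $C_0(E)\to C_0(gE)$, which is completely isometric for the minimal {\oss}s (a bounded map into a minimal operator space has c.b.\ norm equal to its norm). These two maps intertwine $\imath_E$ and $\imath_{gE}$, so one is invertible exactly when the other is, and conjugating $\imath_E^{-1}$ by complete isometries leaves its c.b.\ norm unchanged; hence $gE$ is cb-Helson with the same constant.

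For the subset part, let $F\subseteq E$ be closed, so that $I_E\subseteq I_F$ and $\FA(F)=\FA(G)/I_F\cong(\FA(G)/I_E)/(I_F/I_E)$. Since a quotient of an operator-space quotient is completely isometrically the corresponding quotient, the restriction map $r\colon\FA(E)\to\FA(F)$ is a complete quotient map. On the commutative side, restriction $\rho\colon C_0(E)\to C_0(F)$ is a $*$-homomorphism whose range is closed (being the image of a $*$-homomorphism) and dense (by Stone--Weierstrass), hence surjective; as the minimal {\oss} on a commutative $\Cst$-algebra is its canonical one, $\rho\colon\MIN{C_0(E)}\to\MIN{C_0(F)}$ is also a complete quotient map. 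These fit into the commuting square $\imath_F\, r=\rho\,\imath_E$.

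It remains to transfer the cb-Helson property across this square, and this is the step that needs care. The naive move---composing $r\,\imath_E^{-1}$ with a norm-one linear section of $\rho$---fails, since such sections need not exist; instead I would argue at every matrix level using the definition of a complete metric surjection. Writing $\kappa=\cbnorm{\imath_E^{-1}}$ and taking $x\in M_n(\MIN{C_0(F)})$ with $\norm{x}<1$, I would lift $x$ through the complete quotient map $\rho$ to some $\tilde x\in M_n(\MIN{C_0(E)})$ with $\rho_n(\tilde x)=x$ and $\norm{\tilde x}<1$, apply $(\imath_E^{-1})_n$ (cost $\kappa$) and then the complete contraction $r_n$, and finally read off from $\imath_F\, r=\rho\,\imath_E$ that the resulting element of $M_n(\FA(F))$ is carried to $x$ by $\imath_F$. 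Taking $n=1$ yields surjectivity of $\imath_F$, and the uniform bound over all $n$ gives $\cbnorm{\imath_F^{-1}}\le\kappa$. The only genuine obstacle here is conceptual rather than computational: one must perform the lifting simultaneously at all matrix levels, rather than hoping for a single global section.
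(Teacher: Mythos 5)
Your proposal is correct and follows essentially the same route as the paper: the translation claim via the complete isometry of $\FA(G)$ induced by left translation, and the subset claim via the commuting square of $\imath_E$, $\imath_F$ and the two restriction maps, both of which are complete quotient maps. The paper dismisses the final step as ``an easy diagram chase''; your matrix-level lifting argument is exactly that chase written out, and it is sound.
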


\begin{proof}
For each fixed $x\in G$, left translation by $x$ is a complete isometry of the group von Neumann algebra, and hence induces a complete isometry of $\FA(G)$. This proves the first claim.

To prove the second claim: let $E\subseteq G$ be a cb-Helson set and let $F\subseteq E$ be a closed subset. We have a commuting diagram of complete contractions
\[
\begin{CD}
\FA_G(E) @>{\imath_E}>> \MIN{C_0(E)} \\
@V{f}VV @VV{g}V \\
\FA_G(F) @>{\imath_F}>> \MIN{C_0(F)}
\end{CD}
\]
where the vertical arrows are given by restriction of functions. Now $f$ is a complete quotient map, by definition of the {\oss}s on $\FA_G(E)$ and $\FA_G(F)$; and $g$ is a complete quotient map, since (by a variation of the Tietze extension theorem)
 it is a surjective \starhm\ between $\Cst$-algebras. Since $\imath_E$ is surjective and $\imath_E^{-1}$ is c.b, an easy diagram chase shows $\imath_F$ is also surjective and $\cbnorm{\imath_F^{-1}}\leq \cbnorm{\imath_E^{-1}}$.
\end{proof}

\begin{thm}\label{t:characterization}
Let $G$ be a \SIN\ group and let $E$ be a closed subset. Then $\FA_G(E)$ is cb-isomorphic to an operator algebra if and only if $E$ is cb-Helson. (In particular, if $E$ is not a Helson set, $\FA_G(E)$ cannot be cb-isomorphic to an operator algebra.)
\end{thm}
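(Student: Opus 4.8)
The plan is to show that the whole statement reduces to a single clean assertion: for a \SIN{} group $G$, the algebra $\FA(E)$ is cb-isomorphic to an operator algebra \emph{if and only if} the canonical map $\imath_E\colon \FA(E)\to\MIN{C_0(E)}$ is itself a cb-isomorphism. For the easy implication ($\Leftarrow$), suppose $E$ is cb-Helson. Then $\imath_E$ is a completely contractive bijection (it is completely contractive by the remark preceding the definition, injective always, and surjective since $E$ is Helson) with completely bounded inverse, hence a cb-isomorphism of $\FA(E)$ onto $\MIN{C_0(E)}$. Since $C_0(E)$ is a commutative $\Cst$-algebra, its minimal \oss{} coincides with the \oss{} it carries as a faithfully represented $\Cst$-algebra, so $\MIN{C_0(E)}$ \emph{is} an operator algebra; as $\imath_E$ is also an algebra homomorphism, $\FA(E)$ is cb-isomorphic to the operator algebra $C_0(E)$.

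For the substantive implication ($\Rightarrow$), suppose $\psi\colon \FA(E)\to\mathcal A\subseteq\Bdd(\cH)$ is a cb algebra isomorphism onto an operator algebra with $\psi^{-1}$ c.b. Let $q\colon\FA(G)\to\FA(E)$ be the complete quotient map, so that $\Phi:=\psi q\colon\FA(G)\to\Bdd(\cH)$ is a completely bounded homomorphism. This is where the \SIN{} hypothesis enters: by \cite[Theorem 20]{BranSam_corep} there is a constant $K$ with $\norm{\Phi(f)}\le K\norm{f}_\infty$ for all $f\in\FA(G)$. Since $\FA(G)$ is uniformly dense in $C_0(G)$, the map $\Phi$ extends to a bounded homomorphism $\tilde\Phi\colon C_0(G)\to\Bdd(\cH)$. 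As $\Phi$ annihilates $I_E$, and $I_E$ is uniformly dense in the kernel of the restriction $\Cst$-morphism $r\colon C_0(G)\to C_0(E)$ (by regularity of $\FA(G)$), the map $\tilde\Phi$ descends through $r$ to a bounded homomorphism $\rho\colon C_0(E)\to\Bdd(\cH)$ satisfying $\rho\,\imath_E=\psi$; the Tietze-type surjectivity of $r$ (already invoked in Lemma~\ref{l:hereditary}) then gives $\norm{\rho(v)}\le K\norm{v}_{C_0(E)}$ for all $v\in C_0(E)$.

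I would now read off that $E$ is cb-Helson. Because $C_0(E)$ is commutative, hence nuclear, every bounded homomorphism of it is similar to a $*$-representation and therefore completely bounded; thus $\rho$ is c.b. From $\psi(u)=\rho(\imath_E(u))$ one obtains $\norm{u}_{\FA(E)}\le\norm{\psi^{-1}}\,\norm{\psi(u)}\le\norm{\psi^{-1}}\,\norm{\rho}\,\norm{\imath_E(u)}_{C_0(E)}$, so $\imath_E^{-1}$ is bounded on its range; since that range is dense by Stone--Weierstrass and now closed, $\imath_E$ is surjective, i.e.\ $E$ is Helson. Surjectivity forces $\rho$ to take values in $\mathcal A=\psi(\FA(E))$, so on \emph{all} of $C_0(E)$ we have $\imath_E^{-1}=\psi^{-1}\rho$, a composite of completely bounded maps, whence $\cbnorm{\imath_E^{-1}}\le\cbnorm{\psi^{-1}}\,\cbnorm{\rho}<\infty$ and $E$ is cb-Helson. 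The parenthetical ``in particular'' clause is then automatic, since cb-Helson sets are Helson by definition.

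I expect the main obstacle to be the passage from the completely bounded homomorphism $\Phi$ on $\FA(G)$ to a \emph{completely} bounded homomorphism $\rho$ on $C_0(E)$. The descent to $C_0$ depends entirely on the \SIN{}-group extension result of \cite{BranSam_corep}, and the crucial upgrade from boundedness of $\rho$ to complete boundedness rests on the similarity theory for nuclear $\Cst$-algebras; the surjectivity needed for the Helson property falls out as a byproduct of mere boundedness, and the remaining diagram chases are routine.
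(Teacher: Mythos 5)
Your proof is correct and follows essentially the same route as the paper's: the trivial direction via $\MIN{C_0(E)}$, then \cite[Theorem 20]{BranSam_corep} applied to $\psi q$, the quotient property of $C_0(G)\to C_0(E)$ to get surjectivity of $\imath_E$, and the Dixmier--Day similarity theorem to upgrade the bounded homomorphism on $C_0(E)$ to a completely bounded one. The only (harmless) difference is that you build $\rho$ by extending to $C_0(G)$ and descending through the restriction map before proving surjectivity, whereas the paper first shows $\imath_E$ is onto and then takes $\rho=\theta\imath_E^{-1}$ directly; these yield the same map.
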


\begin{proof}
One direction is trivial: if $E$ is cb-Helson then $\FA_G(E)$ is cb-isomorphic, as a completely contractive Banach algebra, to the operator algebra $\MIN{C_0(E)}$.

The other direction is similar to the proof of Theorem~\ref{t:non-cb}.
By assumption, there is some closed subalgebra $\fA\subset\Bdd(\cH)$ and a cb-isomorphism of algebras $\theta:\FA_G(E)\to\fA$. Writing $q_E$ for the complete quotient map $\FA(G)\to\FA_G(E)$, the homomorphism $\theta q_E: \FA(G)\to\Bdd(\cH)$ is~c.b. Hence, by 
\cite[Theorem 20]{BranSam_corep}, there exists $K>0$ such that
$\norm{\theta q_E(f)} \leq K\norm{f}_\infty$ for all $f\in \FA(G)$.
Since $\theta^{-1}$ is (completely) bounded,
\begin{equation}\label{eq:unif}
\norm{q_E(f)}\leq K\norm{\theta^{-1}}\norm{f}_\infty
\end{equation}
for all $f\in\FA(G)$. Moreover, since the restriction map $C_0(G)\to C_0(E)$ is a quotient map of Banach spaces, the inequality \eqref{eq:unif} remains valid if we replace the supremum norm over $G$ with the supremum norm over $E$.
Therefore $\imath_E$ has closed range and so is surjective.

As $\theta\imath_E^{-1}:C_0(E)\to\Bdd(\cH)$ is a bounded homomorphism, by well-known results of Dixmier and Day it must be similar to a $*$-homomorphism (see~e.g.~\cite[Theorem 9.7]{Pau_CBbook2}), and so is c.b.~as a map $\MIN{C_0(E)}\to\Bdd(\cH)$.
Thus $\imath_E^{-1} = \theta^{-1}\circ \theta\imath_E^{-1}:\MIN{C_0(E)}\to \FA_G(E)$ is the composition of two c.b.~maps, and is therefore c.b.~as claimed.
\end{proof}

In contrast to the much-studied notion of a Helson set, the cb-version seems to have gone unexplored. In particular, the authors do not know of any examples of infinite cb-Helson sets; and in Theorem~\ref{t:no-cb-Helson} we shall show that a large class of infinite Helson sets fail to be cb-Helson. Our proof goes via the following lemma.

\begin{lem}\label{l:core-case}
Let $G$ be a locally compact group and $H$ a closed, abelian subgroup. Let $F$ be a finite subset of $H$, which is cb-Helson with constant~$K$. Then $\abs{F}\leq 2K^2$.
\end{lem}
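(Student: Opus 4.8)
The plan is to turn the cb-Helson condition into a single operator-space inequality on the \emph{maximal} operator space $\FA(F)$, and then to defeat that inequality with an explicit family of anticommuting unitaries; the numerical constant $2$ will come out of a Clifford-algebra computation.

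I would begin by using that $H$ is abelian. Since $F\subseteq H$, the remark following Theorem~\ref{t:characterization} lets me compute the canonical o.s.s.\ on $\FA(F)$ through the complete quotient map $\FA(H)\to\FA(F)$. Because $\VN(H)$ is commutative it carries its minimal o.s.s.; as $F$ is finite, $\FA(F)^*$ is completely isometrically the subspace $\mathrm{span}\{\lambda(x):x\in F\}$ of $\VN(H)$, which is therefore minimal, so that $\FA(F)=\MAX{\FA(F)}$. The gain is that every contraction $u\colon\FA(F)\to M_N$ is then automatically completely contractive. Since $F$ is finite it is automatically Helson, and the cb-Helson constant is $K=\cbnorm{\imath_F^{-1}}$ for $\imath_F^{-1}\colon\MIN{C_0(F)}\to\FA(F)$. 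Consequently, for any contraction $u\colon\FA(F)\to M_N$ the composite $w:=u\circ\imath_F^{-1}\colon\MIN{C_0(F)}\to M_N$ satisfies $\cbnorm{w}\le K$; and since its domain is a minimal operator space over a commutative $C^*$-algebra, writing $T_x:=u(\chi_{\{x\}})$ one has the exact formula $\cbnorm{w}=\sup_k\sup\bigl\{\norm{\sum_{x\in F}A_x\otimes T_x}:A_x\in M_k,\ \norm{A_x}\le1\bigr\}$.

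It therefore suffices to exhibit operators $T_x$ for which $u$ is a contraction while $\norm{\sum_x A_x\otimes T_x}$ is large for some contractions $A_x$. I would take $U_x$ ($x\in F$) to be anticommuting self-adjoint unitaries in some $M_N$ (a Clifford, i.e.\ iterated Pauli, system) and set $T_x=(2\abs F)^{-1/2}U_x$. To see that $u(f)=\sum_x f(x)T_x$ is a contraction, I pass to $\FA(H)=L^1(\widehat H)$ and use Fourier inversion: from $f(x)=\int_{\widehat H}\widehat f(\gamma)\,\gamma(x)\,d\gamma$ one gets $\norm{u(f)}\le\int_{\widehat H}\abs{\widehat f(\gamma)}\,\norm{\sum_x\gamma(x)T_x}\,d\gamma$, so $u$ is contractive as soon as $\norm{\sum_x\gamma(x)T_x}\le1$ for every character $\gamma$. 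Finally I would test the displayed supremum with $A_x=\overline{U_x}$, evaluated on the maximally entangled vector $\Omega\in\Cplx^N\otimes\Cplx^N$: since each $U_x$ is a unitary, $(\overline{U_x}\otimes U_x)\Omega=\Omega$, whence $\norm{\sum_x\overline{U_x}\otimes U_x}\ge\abs F$ and so $\cbnorm{w}\ge(2\abs F)^{-1/2}\abs F=\sqrt{\abs F/2}$. Combining, $K\ge\sqrt{\abs F/2}$, i.e.\ $\abs F\le 2K^2$.

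The crux is the sharp estimate that makes $u$ a contraction with exactly the right constant, namely $\norm{\sum_{x\in F}\omega_x U_x}\le\sqrt{2\abs F}$ for all $\omega\in\mathbb{T}^F$. This is where the factor $2$ is won, and it is the one genuinely non-formal step. Expanding, $\bigl(\sum_x\omega_x U_x\bigr)\bigl(\sum_x\omega_x U_x\bigr)^*=\abs F\,I+C$ with $C=2i\sum_{x<y}\mathrm{Im}(\omega_x\overline{\omega_y})\,U_xU_y$ self-adjoint; using the $O(\abs F)$-covariance of the Clifford relations to diagonalise the rank-two real antisymmetric form $\bigl(2\,\mathrm{Im}(\omega_x\overline{\omega_y})\bigr)$, one finds $\norm{C}=\sqrt{\abs F^2-\bigl|\sum_x\omega_x^2\bigr|^2}\le\abs F$, so that $\norm{\sum_x\omega_x U_x}^2=\abs F+\norm{C}\le2\abs F$. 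Once this inequality is in hand, the reduction to $H$, the identification of $\FA(F)$ as maximal, and the entangled-vector estimate are all routine.
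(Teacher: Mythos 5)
Your proof is correct, and its skeleton is the same as the paper's: both arguments hinge on identifying $\FA(F)$ as a \emph{maximal} operator space (you via the dual $I_F^\perp\subseteq\VN(H)$ being minimal, the paper via $\FA(H)=L^1(\widehat H)$ being maximal and quotients of maximal spaces being maximal), and both then use the cb-Helson hypothesis to control a map out of $\MIN{C_0(F)}$ whose cb-norm can be bounded below by $\sqrt{\abs{F}/2}$. The difference is in how that lower bound is obtained: the paper simply factors ${\rm id}:\MIN{C_0(F)}\to\MAX{C_0(F)}$ through $\FA(F)$ and cites Paulsen's estimate $\cbnorm{{\rm id}:\MIN{\ell_\infty^n}\to\MAX{\ell_\infty^n}}\geq\sqrt{n/2}$ from Pisier's book, whereas you reprove that estimate from scratch in the case at hand, with an explicit spin system: the identity $SS^*=\abs{F}I+C$ with $\norm{C}=\sqrt{\abs{F}^2-\abs{\sum_x\omega_x^2}^2}\leq\abs{F}$ (which checks out, via the $O(n)$-covariance of the Clifford relations applied to the rank-two antisymmetric form), the convexity/Fourier-inversion argument making $u$ contractive on $\FA(F)$, and the maximally entangled vector giving $\norm{\sum_x\overline{U_x}\otimes U_x}\geq\abs{F}$. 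What your route buys is a self-contained and fully explicit proof with the sharp constant visibly accounted for; what the paper's route buys is brevity and a cleaner conceptual statement (the whole content is ``$\FA(F)$ is maximal, so $K$ dominates the MIN-to-MAX identity''). One small presentational point: your verification that $u$ is contractive only treats unimodular coefficient tuples $\gamma\vert_F$; this does suffice, since every $g\in\FA(H)$ is the Fourier transform of an $L^1(\widehat H)$ function and you take the infimum over extensions, but it is worth saying explicitly that this is why testing against characters alone is enough.
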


\begin{proof}
We start by identifying the \oss\ on $\FA_G(F)$.
As remarked near the start of this section, we may identify $\FA_G(F)$ completely isometrically with $\FA_H(F)=\FA(H)/I_F$. But since $\FA(H)=L^1(\widehat{H})$ is the predual of an abelian von Neumann algebra, it is a maximal operator space (see~\cite[\S3.3]{ER_OSbook}). Quotients of maximal operator spaces are maximal (\cite[Proposition~3.3]{Pis_OSbook}) and therefore $\FA_G(F)$ is a maximal operator space.

Now consider the identity map ${\rm id}$ from $\MIN{C_0(F)}$ to $\MAX{C_0(F)}$.
By assumption, the map $\imath_F^{-1}:\MIN{C_0(F)}\to \FA_G(F)$ has c.b.~norm~$K$.
Moreover, since $\imath_F$ is contractive as a map between Banach spaces and $\FA_G(F)$ is a \emph{maximal} operator space, the map $\imath_F: \FA_G(F)\to \MAX{C_0(F)}$ is \emph{completely} contractive. Thus by factorizing ${\rm id}: \MIN{C_0(F)}\to \MAX{C_0(F)}$ through $\FA_G(F)$, we see that it has c.b.~norm $\leq K$. On the other hand, by a result of Paulsen,
 the identity map $\MIN{\ell_\infty^n}\to\MAX{\ell_\infty^n}$ has c.b.~norm $\geq\sqrt{n/2}$: see \cite[Corollary~3.5 and Theorem~3.8]{Pis_OSbook} for details.
We conclude that $K \geq \sqrt{\abs{F}/2}$ as required.
\end{proof}

\begin{thm}\label{t:no-cb-Helson}
Let $G$ be a locally compact group and $E$ a cb-Helson subset. Suppose there exist closed abelian subgroups $H_1,\dots, H_m$ in $G$ and elements $y_1,\dots, y_m\in G$ such that $E\subseteq \bigcup_{i=1}^m y_iH_m$. Then $E$ is finite.
\end{thm}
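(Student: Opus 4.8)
The plan is to reduce the global statement to the finite, single-coset situation already handled by Lemma~\ref{l:core-case}, using the hereditary and translation-invariance properties recorded in Lemma~\ref{l:hereditary}. Let $K$ denote the cb-Helson constant of $E$, and decompose $E=\bigcup_{i=1}^m E_i$, where $E_i:=E\cap y_iH_i$. Each coset $y_iH_i$ is closed (left translation is a homeomorphism and $H_i$ is a closed subgroup), so each $E_i$ is a closed subset of $E$; by Lemma~\ref{l:hereditary} it is therefore cb-Helson with constant at most $K$.

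Next I would move each piece into its abelian subgroup. Since $E_i\subseteq y_iH_i$, the left translate $y_i^{-1}E_i$ is contained in $H_i$, and by the translation-invariance part of Lemma~\ref{l:hereditary} it is again cb-Helson, with the same constant, hence with constant at most $K$. Now fix $i$ and let $F$ be any finite subset of $y_i^{-1}E_i$. Finite sets are closed, so $F$ is a closed subset of the cb-Helson set $y_i^{-1}E_i$, and is thus itself cb-Helson with constant at most $K$. Because $F$ is a finite subset of the closed abelian subgroup $H_i$, Lemma~\ref{l:core-case} applies and yields $\abs{F}\leq 2K^2$. Since this bound holds for \emph{every} finite subset of $y_i^{-1}E_i$, we conclude that $\abs{E_i}=\abs{y_i^{-1}E_i}\leq 2K^2$, and summing over the finitely many cosets gives $\abs{E}\leq 2mK^2<\infty$.

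I do not expect a genuine obstacle here: the essential analytic content---the comparison of minimal and maximal {\oss}s via Paulsen's estimate for $\MIN{\ell_\infty^n}\to\MAX{\ell_\infty^n}$---has already been extracted in Lemma~\ref{l:core-case}, and the present argument is a purely combinatorial packaging of that bound. The only points requiring care are bookkeeping ones: verifying that each $E_i$ and each finite $F$ is genuinely closed (so that Lemma~\ref{l:hereditary} may be invoked), and observing that a uniform cardinality bound on all finite subsets of $y_i^{-1}E_i$ forces $y_i^{-1}E_i$ itself to be finite. The mild subtlety to keep in view is that the covering union is finite, which is exactly what allows the per-coset bound $2K^2$ to aggregate to the finite total $2mK^2$.
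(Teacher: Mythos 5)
Your proposal is correct and follows essentially the same route as the paper: both reduce to Lemma~\ref{l:core-case} via the translation-invariance and heredity statements of Lemma~\ref{l:hereditary}, differing only in bookkeeping (the paper fixes a finite subset of $E$ first and then splits it among the cosets, whereas you split $E$ first and then bound finite subsets of each translated piece). No gaps.
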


\begin{remstar}
We do not know if there exist a locally compact group, and an infinite Helson subset in it, \emph{not} contained in some finite union of cosets of closed abelian subgroups.
\end{remstar}

\begin{proof}[Proof of Theorem~\ref{t:no-cb-Helson}]
Let $K$ be the cb-Helson constant of $E$, and fix a finite subset $F\subseteq E$. It suffices to show that $\abs{F}$ is bounded above by some constant depending only on $m$ and~$K$.

 For $i=1,\dots, m$ let $F_i=y_i^{-1}(F\cap y_iH) = y_i^{-1}F\cap H_i$; by Lemma~\ref{l:hereditary}, $F_i$ is cb-Helson, with constant $\leq K$.
Since $F_i$ is finite and contained in the closed abelian subgroup $H_i$, Lemma~\ref{l:core-case} tells us that $\abs{F_i} \leq 2K^2$. By construction, $F=\bigcup_{i=1}^m y_iF_i$ and so $\abs{F}\leq 2mK^2$, as required.
\end{proof}

Putting Theorems~\ref{t:characterization} and \ref{t:no-cb-Helson} together, we have the following corollary.

\begin{cor}
Let $G$ be a virtually abelian, locally compact group, and let $E$ be a closed subset of $G$. Then $\FA_G(E)$ is cb-isomorphic to an operator algebra if and only if $E$ is finite.
\end{cor}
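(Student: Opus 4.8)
The plan is to obtain the corollary by feeding a single group-theoretic observation into Theorems~\ref{t:characterization} and~\ref{t:no-cb-Helson}. First I would record the relevant structure of $G$. Since $G$ is virtually abelian it contains an abelian subgroup of finite index; replacing this by its normal core (which, being a subgroup of an abelian group, is again abelian, and is normal of finite index) I may assume there is a closed abelian \emph{normal} subgroup $N\trianglelefteq G$ with $[G:N]=m<\infty$. A closed subgroup of finite index has closed complement and is therefore open, so $N$ is open; I then fix coset representatives $y_1,\dots,y_m$ so that $G=\bigcup_{i=1}^m y_iN$.

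Next I would verify that $G$ is a \SIN\ group, which is what licenses the use of Theorem~\ref{t:characterization}. Because $N$ is open, abelian and normal, conjugation by $G$ acts on $N$ through the finite quotient $G/N$ by automorphisms, while $N$ itself acts trivially on $N$. Hence, given any neighbourhood $U$ of the identity inside $N$, the finite intersection $\bigcap_{i=1}^m y_i U y_i^{-1}$ is again a neighbourhood of the identity and is invariant under conjugation by all of $G$. This produces a conjugation-invariant neighbourhood basis at the identity, so $G\in\SIN$.

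With these two facts in hand the corollary is immediate. If $E$ is finite then $\FA(E)$ is finite-dimensional and $\imath_E:\FA(E)\to C_0(E)$ is a linear bijection; as every linear map between finite-dimensional operator spaces is completely bounded, $E$ is cb-Helson, and the trivial direction of Theorem~\ref{t:characterization} gives that $\FA(E)$ is cb-isomorphic to the operator algebra $\MIN{C_0(E)}$. Conversely, if $\FA(E)$ is cb-isomorphic to an operator algebra, then since $G\in\SIN$ Theorem~\ref{t:characterization} shows $E$ is cb-Helson; taking $H_1=\dots=H_m=N$ in the containment $E\subseteq G=\bigcup_{i=1}^m y_iN$, Theorem~\ref{t:no-cb-Helson} forces $E$ to be finite.

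The genuine analytic content lives entirely in Theorems~\ref{t:characterization} and~\ref{t:no-cb-Helson} (and, underneath them, in Lemmas~\ref{l:hereditary} and~\ref{l:core-case} together with the Brannan--Samei estimate); the only work here is the elementary group theory. Accordingly I expect the sole---and minor---obstacle to be checking that virtual abelianness yields a \SIN\ group, since without this one cannot invoke Theorem~\ref{t:characterization}. The finite union of abelian cosets required by Theorem~\ref{t:no-cb-Helson} then comes for free from the finite index of $N$.
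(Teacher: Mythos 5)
Your proposal is correct and follows essentially the same route as the paper: deduce that $G$ is \SIN, apply Theorem~\ref{t:characterization} to get that $E$ is cb-Helson, and then apply Theorem~\ref{t:no-cb-Helson} using the finitely many cosets of the abelian finite-index subgroup. The only difference is that you spell out the (correct) verifications that virtual abelianness implies the \SIN\ property and that finite sets are cb-Helson, both of which the paper simply asserts as known or trivial.
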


\begin{proof}
Sufficiency is trivial so we only need to prove necessity. Thus, suppose $\FA_G(E)$ is cb-isomorphic to an operator algebra. Since $G$ is virtually abelian, it is \SIN, so Theorem~\ref{t:characterization} implies that $E$ must be a cb-Helson set. But then since $G$ is a union of finitely many cosets of a single abelian subgroup, Theorem~\ref{t:no-cb-Helson} implies $E$ is finite.
\end{proof}

\end{section}

\begin{section}{Further remarks}

\begin{subsection}{An explicit realization of $\ell_2$ as an operator algebra}\label{s:explicit}
For sake of brevity, our proof of Theorem~\ref{t:non-cb} used the fact that $\ell_p$ is isomorphic to an operator algebra for $1\leq p \leq 2$, without saying anything about the proof given in~\cite{Davie_Q-alg}. In fact, the results we quoted from \cite{Davie_Q-alg} establish something stronger; they show that $\ell_p$ is isomorphic to the quotient of some uniform algebra by some closed ideal. Algebras of this form are called \dt{Q-algebras}, and it is a result of B. J. Cole that every Q-algebra admits an isometric algebra embedding into $\Bdd(\cH)$ for some Hilbert space $\cH$; his argument is a variant on the GNS construction, and is very readably described in~\cite[\S50]{BonsDunc}.

One price of the generality of the arguments in \cite{Davie_Q-alg}, and their combination with Cole's theorem, is that one does not obtain an \emph{explicit} realization of $\ell_p$ as an operator algebra (or more precisely, a Hilbert space $\cH$ and an explicit set of mutually orthogonal idempotents in $\Bdd(\cH)$, whose closed linear span is isomorphic to $\ell_2$). 
The reader may therefore feel that our ``construction'' of a non-completely bounded homomorphism is, as it stands, not very constructive.

We now address this point, by describing a simple and direct algebra embedding of $\ell_2$ into $\Bdd(\cH)$, which may be of independent interest.
The construction is probably not new, but we did not find the embedding written down explicitly in the sources that we checked.

\paragraph{An explicit embedding.}
If $E$ is an index set, let $E_0$ be $E$ with an extra element $\omega$ adjoined, and for each $i\in E$ define $u_i \in \Cpct(\ell_2(E_0))$ by
\[ u_i (\delta_j) = \left\{ \begin{aligned} \delta_i & \quad\text{if $i=j$ or $j=\omega$} \\ 0 & \quad\text{ otherwise}
  \end{aligned} \right.
  \]
Then each $u_i$ is a rank-one projection in $\Cpct(\ell_2(E_0))$. Moreover, if $a\in c_{00}(E)$ then
\[ \norm{ \sum_{i\in E} a_i u_i } \geq \norm{ \sum_{i\in E} a_i u_i(\delta_\omega)}_2 = \norm{ \sum_{i\in E} a_i \delta_i}_2 = \norm{a}_2 \]
while for each $x\in \ell_2(E_0)$,
\[ \norm{\sum_{i\in E} a_i u_i(x)}_2 = \norm{ \sum_{i\in E} a_i (x_\omega +x_i) \delta_i }_2  \leq \norm{a}_2 \sup_{i\in E} \abs{x_\omega+x_i} \leq \norm{a}_2 \sqrt{2} \norm{x}_2\,.\]
Hence, the map $\theta: c_{00}(E) \to \Cpct(\ell_2(E_0))$ defined by $\theta(a) = \sum_i a_i u_i$ extends to a continuous homomorphism $\ell_2(E)\to\Cpct(\ell_2(E_0))$ that has closed range.
(A small variation on this construction allows one to embed $\ell_2$ as a closed subalgebra of $\prod_{n=1}^\infty M_n(\Cplx)$, at least when the index set $E$ is countable.)

\begin{remstar}
Cole's theorem does not characterize Q-algebras: that is, not every commutative, closed subalgebra of $\Bdd(\cH)$ is a Q-algebra. It is not clear if one can use the embedding $\theta$ to obtain any kind of ``concrete'' description of $\ell_2$ as a Q-algebra, so even for $p=2$ the results of \cite{Davie_Q-alg} seem to genuinely go further than our simple construction does.
\end{remstar}
\end{subsection}

\begin{subsection}{Hilbertian, completely contractive Banach algebras.}
Let us say that a completely contractive Banach algebra is \dt{Hilbertian} if its underlying Banach space is isomorphic (but not necessarily isometric) to a Hilbert space.
Leinert's result shows that we can obtain examples of such algebras as quotients of suitable Fourier algebras; Davie's result shows that we can also obtain such examples as nonself-adjoint operator algebras. The question of whether one could get both properties simultaneously was the original motivation for Theorem~\ref{t:characterization} (which answers the question in the negative).

It would be interesting to know more about the possible examples of Hilbertian, completely contractive Banach algebras. We leave this as a possible avenue for future work; note that examples of commutative, Hilbertian operator algebras are studied in some detail, in~\cite{BleMer_JOT}. 
\end{subsection}

\begin{subsection}{Open questions}
Using the estimate from \cite{Boz_PAMS75} and the construction in Section~\ref{s:explicit}, we see that the homomorphism constructed in Theorem~\ref{t:non-cb} can be chosen to have norm $\leq 2\sqrt{2}$. On the other hand, if $G$ is discrete and $\theta: \FA(G)\to \Bdd(\cH)$ is a \emph{contractive} algebra homomorphism, then for each $x\in G$ the idempotent $\theta(\delta_x)$ has norm $\leq 1$ and so must be self-adjoint; hence $\theta$ extends continuously to $C_0(G)$ and is therefore completely contractive. (We thank M. Brannan for this observation.) Therefore the following question seems natural.

\begin{qu}
Does there exist a constant $C>1$ such that, whenever $G$ is a discrete group, every algebra homomorphism $\FA(G)\to\Bdd(\cH)$ with norm $\leq C$ is automatically completely bounded?
\end{qu}

The groups to which Theorem~\ref{t:non-cb} applies are all non-amenable, motivating our next question.

\begin{qu}
Let $G$ be an amenable, locally compact group, and let $\cH$ be a Hilbert space. Is every bounded representation $\FA(G)\to\Bdd(\cH)$ automatically completely bounded? What if we assume, moreover, that $G$ is discrete?
\end{qu}

Our last question is implicit in the discussion of Section~\ref{s:cb-Helson}, but we restate it for emphasis.

\begin{qu}
Does there exist a locally compact group which contains an infinite cb-Helson set? Can the group be \SIN? Can it be compact?
\end{qu}

\end{subsection}

\subsection*{Acknowledgments}
The authors thank Michael Brannan and Matthew Daws for useful exchanges. Thanks are also due to David Blecher, for pointing out the reference~\cite{BleMer_JOT} in response to an earlier version of this article. YC was supported by NSERC Discovery Grant 402153-2011 and ES was supported by NSERC Discovery Grant 366066-2009.
\end{section}

\appendix

\section*{Appendix: an alternative proof of Theorem~\ref{t:non-cb}}
In this appendix, we show how Theorem~\ref{t:non-cb} may be proved without using the results of~\cite{BranSam_corep}, at the expense of using more advanced results from operator space theory.
The outline of this alternative argument is similar to the previous one: the differences are that we replace Leinert's theorem with an operator space version, due to Haagerup and Pisier \cite[\S1]{HaaPis_Duke}, and replace Davie's result with an operator space version, due to Blecher and Le Merdy \cite[\S2]{BleMer_JOT}.
The extra matrix-norm structure at our disposal then allows us to avoid using~\cite{BranSam_corep}.

We require some standard properties of three particular {\oss}s on $\ell^2$: these are $R\cap C$, its operator space dual $R+C$, and $\MAX{\ell^2}$.
A concise overview of these and some relatives is given in \cite[\S2]{BleMer_JOT}.
We write $\tili:\MAX{\ell^2} \to R+C$ for the canonical, completely contractive bijection.
Then ${\tili\ }^{-1}:R +C \to \MAX{\ell^2}$ is not completely bounded; this can be shown directly, but it also follows immediately from \cite[Theorem 2.1]{BleMer_JOT}.

Now $\MAX{\ell^2}$, equipped with pointwise multiplication, is cb-isomorphic to an operator algebra. More precisely:

\begin{fact}[Blecher--Le Merdy]
There is a Hilbert space $\cH$ and an algebra homomorphism $\theta:\ell^2 \to \Bdd(\cH)$, with closed range $\fA$, such that $\theta: \MAX{\ell^2(S)} \to \fA$ is a cb-isomorphism.
\end{fact}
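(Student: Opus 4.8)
The plan is to deduce the statement from the abstract characterization of operator algebras due to Blecher, Ruan and Sinclair (BRS). Recall that an operator space $A$ equipped with an associative bilinear product is completely isometrically isomorphic to a closed subalgebra of some $\Bdd(\cH)$ precisely when the multiplication extends to a complete contraction $A\widehat{\otimes}A\to A$, where $\widehat{\otimes}$ denotes the operator space projective tensor product (see \cite[Ch.~16]{Pau_CBbook2}; in the non-unital case one first passes to the unitization). Taking $A=\MAX{\ell^2}$ with pointwise product, it therefore suffices to check that the multiplication map $m\colon\MAX{\ell^2}\widehat{\otimes}\MAX{\ell^2}\to\MAX{\ell^2}$ is completely contractive. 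Granting this, the theorem produces a completely isometric algebra homomorphism $\theta\colon\MAX{\ell^2}\to\Bdd(\cH)$ whose range $\fA$ is automatically closed (as $\theta$ is isometric and $\ell^2$ is complete); in particular $\theta\colon\MAX{\ell^2}\to\fA$ is the asserted cb-isomorphism, and the same argument works verbatim for $\ell^2(S)$ over any index set $S$.

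The crux is the completely isometric identity
\[ \MAX{\ell^2}\widehat{\otimes}\MAX{\ell^2}\;=\;\MAX{\ell^2\otimes_\gamma\ell^2}, \]
in which $\otimes_\gamma$ is the \emph{Banach space} projective tensor product. I would establish it from the universal property characterizing the maximal structure, namely that for every operator space $Z$ one has a natural isometric identification $CB(\MAX{V},Z)=B(V,Z)$ (boundedness out of a maximal space is automatically complete boundedness). Inserting this twice into the defining adjunction $CB(X\widehat{\otimes}Y,Z)=CB(X,CB(Y,Z))$ of $\widehat{\otimes}$, and linking the two sides through the universal property of $\otimes_\gamma$, gives natural isometric identifications
\[ CB\big(\MAX{\ell^2}\widehat{\otimes}\MAX{\ell^2},Z\big)=B\big(\ell^2\otimes_\gamma\ell^2,Z\big)=CB\big(\MAX{\ell^2\otimes_\gamma\ell^2},Z\big) \]
for every operator space $Z$; replacing $Z$ by $M_n(Z)$ controls all matrix levels, and the identity follows, with the identity map on the algebraic tensor product as the witnessing complete isometry.

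With this identity in hand the remaining step is soft. At the Banach-space level, pointwise multiplication is a contraction $\ell^2\otimes_\gamma\ell^2\to\ell^2$, since $\norm{ab}_2\le\norm{a}_2\norm{b}_\infty\le\norm{a}_2\norm{b}_2$ and the projective norm is the largest cross norm. But $\MAX{\ell^2}\widehat{\otimes}\MAX{\ell^2}=\MAX{\ell^2\otimes_\gamma\ell^2}$ is \emph{itself} a maximal operator space, so this bounded map is automatically completely bounded, and being a contraction it is a complete contraction. This is exactly the hypothesis required by the Blecher--Ruan--Sinclair theorem.

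I expect the main obstacle, and the only point of real content, to be the tensor identity for $\MAX$, together with the recognition that what must be verified is \emph{complete} contractivity rather than mere contractivity of the product. Maximality is precisely what makes this free: it is the operator space structure under which boundedness upgrades automatically to complete boundedness, so the elementary estimate $\norm{ab}_2\le\norm{a}_2\norm{b}_2$ is promoted, at no extra cost, to the complete contractivity BRS demands. By contrast, for a Hilbertian structure such as $R\cap C$ or $R+C$ the analogous product map fails to be completely contractive, which is why those structures do not yield operator algebras and why the maximal structure is the right one here. The one genuinely technical nuisance is that $\ell^2$ has no bounded approximate identity under pointwise multiplication; this is dispatched either by quoting the non-unital form of BRS or by unitizing before applying the unital form.
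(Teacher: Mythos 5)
There is a genuine gap, and it sits exactly at the point you describe as ``soft'': the appeal to Blecher--Ruan--Sinclair. The BRS theorem (and Blecher's completely bounded variant, which is what the paper actually cites) requires the multiplication to be completely contractive (resp.\ completely bounded) as a map on the \emph{Haagerup} tensor product $A\otimes_h A\to A$ --- equivalently, $\norm{ab}_{M_n(A)}\leq\norm{a}_{M_n(A)}\norm{b}_{M_n(A)}$ for the \emph{matrix} product of $a,b\in M_n(A)$. What you verify is complete contractivity on the operator space projective tensor product $A\widehat{\otimes}A$. Since the identity map $A\widehat{\otimes}A\to A\otimes_h A$ is a complete contraction, your condition is strictly \emph{weaker} than the BRS hypothesis, and does not imply it.

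The gap is not repairable along the same lines. Your tensor identity $\MAX{V}\widehat{\otimes}\MAX{W}=\MAX{V\otimes_\gamma W}$ is correct, and your argument therefore shows that for \emph{every} Banach algebra $A$ with contractive multiplication, the product on $\MAX{A}$ is completely contractive on $\MAX{A}\widehat{\otimes}\MAX{A}$. If that sufficed for BRS, every such $A$ would be completely isometrically isomorphic to an operator algebra in its maximal quantization; this is false, since operator algebras, being closed subalgebras of $\Cst$-algebras, are Arens regular, whereas for instance $\ell^1(\mathbb{Z})$ with convolution is not. So the real content of the Fact is concentrated in the step your argument bypasses: showing that pointwise multiplication on $\MAX{\ell^2}$ is completely bounded with respect to the Haagerup tensor product. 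That is where the special Hilbertian structure of $\ell^2$ (via $R+C$ and $R\cap C$) genuinely enters, and it is precisely what the paper delegates to \cite[Theorem 2.1]{BleMer_JOT} before invoking the cb-characterization \cite[Theorem 2.2]{Ble_MathAnn95}. By comparison, the non-unital issue you flag at the end is minor; but as it stands your proof would need to be rebuilt around an estimate on $\MAX{\ell^2}\otimes_h\MAX{\ell^2}$ rather than on $\MAX{\ell^2}\widehat{\otimes}\MAX{\ell^2}$.
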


This result follows from Blecher's characterization (up to c.b.\ isomorphism) of operator algebras (\cite[Theorem 2.2]{Ble_MathAnn95}, together with \cite[Theorem 2.1]{BleMer_JOT}. (See also \cite[Proposition~5.3.7]{BleMer_book}.)

\medskip

Let $\bbF_\infty$ be the free group of countably infinite rank, and let $S\subset\bbF_\infty$ be the canonical set of free generators. Define $\FA(S)$ to be the coimage of the restriction map $\FA(\bbF_\infty)\to c_0(S)$, equipped with pointwise multiplication and the quotient \oss.
We write $q_S:\FA(\bbF_\infty)\to \FA(S)$ for the resulting quotient map of operator spaces; this is also an algebra homomorphism. By duality, $\FA(S)^*$ may be identified (up to complete isometry) with $\VN(S)$, the \wstar-closed linear span of $S$ inside $\VN(\bbF_\infty)$, and $q_S^*$ may be identified with the inclusion of $\VN(S)$ into $\VN(\bbF_\infty)$.
Let $E_\lambda$ denote the norm-closed linear span of $S$ inside $\Cst_r(\bbF_\infty)$. We then have the following results:
\begin{fact}[Haagerup--Pisier]\
\begin{itemize}
\item There exists a cb-isomorphism $R\cap C \to E_\lm$. In particular, since $E_\lm$ is reflexive as a Banach space, it coincides with $\VN(S)$.
\item $E_\lm$ is completely complemented as an operator subspace of $\Cst_r(\bbF_\infty)$; the completely bounded projection map $\Cst_r(\bbF_\infty)\to E_\lm$ has a \wstar-\wstar-continuous extension to a completely bounded projection map $\VN(\bbF_\infty)\to \VN(S)$.
\item There exists a cb-isomorphism $\phi: \FA(S) \to R+C$.
\end{itemize}
\end{fact}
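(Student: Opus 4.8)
The plan is to reduce all three assertions to a single analytic input---the operator-valued Haagerup inequality for the free generators---and to obtain everything else by soft arguments (operator space duality, reflexivity, and the standard complete contractions attached to the row and column spaces). Write $\lambda$ for the left regular representation of $\bbF_\infty$ on $\ell^2(\bbF_\infty)$, with canonical orthonormal basis $(\delta_g)$, and $\widehat{x}(g)=\langle x\delta_e,\delta_g\rangle$ for the Fourier coefficients of $x\in\VN(\bbF_\infty)$. For the first bullet, let $u\colon R\cap C\to E_\lm$ be the linear bijection sending the $i$-th basis vector to $\lambda(s_i)$. First I would check that $\cbnorm{u^{-1}}\le 1$: for $a=(a_i)\in M_n(R\cap C)$, applying $u^{(n)}(a)=\sum_i a_i\otimes\lambda(s_i)$ to a vector $\xi\otimes\delta_e$ and using $\lambda(s_i)\delta_e=\delta_{s_i}$ together with the orthonormality of the $\delta_{s_i}$ gives $\norm{u^{(n)}(a)}\ge\norm{\sum_i a_i^*a_i}^{1/2}$, and repeating with the adjoint gives $\norm{u^{(n)}(a)}\ge\norm{\sum_i a_ia_i^*}^{1/2}$; the larger of the two right-hand sides is exactly the norm of $a$ in $M_n(R\cap C)$. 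The hard direction is complete boundedness of $u$, i.e.\ the estimate $\norm{\sum_i a_i\otimes\lambda(s_i)}\le C\norm{a}_{M_n(R\cap C)}$, which is precisely the operator-coefficient form of Haagerup's inequality for words of length one; I would import this from Haagerup--Pisier rather than reprove it.

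Granting complete boundedness of $u$, it is a cb-isomorphism of $R\cap C$ onto $E_\lm$. Since $R\cap C$ is reflexive as a Banach space, so is $E_\lm$; and a reflexive subspace of a dual Banach space is \wstar-closed (its unit ball is weakly compact, the inclusion into the dual is weak-to-\wstar\ continuous, and Krein--Smulian applies). As $\VN(S)$ is the \wstar-closure of $E_\lm$, this forces $E_\lm=\VN(S)$, completing the first bullet. The third bullet then follows by duality: dualising $u$ yields a cb-isomorphism $u^*\colon E_\lm^*\to(R\cap C)^*$, where $(R\cap C)^*=R+C$ completely isometrically (from $R^*=C$ and $C^*=R$), while $E_\lm^*=\VN(S)^*=\FA(S)^{**}$. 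Because $\FA(S)^*=\VN(S)\cong R\cap C$ is reflexive, $\FA(S)$ is reflexive and $\FA(S)^{**}=\FA(S)$; hence $\phi:=u^*$ is the required cb-isomorphism $\FA(S)\to R+C$.

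For the second bullet I would exhibit the projection explicitly as $P=u\circ v$, where $v\colon\Cst_r(\bbF_\infty)\to R\cap C$ is the coefficient map $v(x)=(\widehat{x}(s_i))_i$. The key observation is that $v$ is completely contractive: its $C$-valued part is the compression to the coordinates $\{s_i\}$ of the complete contraction $x\mapsto x\delta_e$ into column space, while, rewriting $\widehat{x}(s_i)=\langle x\delta_{s_i^{-1}},\delta_e\rangle$ by means of $x\in\VN(\bbF_\infty)=\rho(\bbF_\infty)'$, its $R$-valued part is the compression to $\{s_i^{-1}\}$ of the complete contraction $x\mapsto\delta_e^*x$ into row space. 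Since $u\circ v(x)=\sum_i\widehat{x}(s_i)\lambda(s_i)$ restricts to the identity on $E_\lm$, the map $P$ is a completely bounded projection of $\Cst_r(\bbF_\infty)$ onto $E_\lm$ (it is c.b.\ because $u$ is and $v$ is completely contractive). The same formulas define $v$ on all of $\VN(\bbF_\infty)$ through the \wstar-continuous vector functionals $x\mapsto\langle x\delta_e,\delta_{s_i}\rangle$, so $P$ extends to a normal, completely bounded projection $\VN(\bbF_\infty)\to\VN(S)$---consistent with the general fact that a completely bounded Fourier multiplier is automatically \wstar-continuous.

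The single genuine obstacle is the complete boundedness of $u$, that is, the operator-valued Haagerup inequality; once it is granted, everything else is soft. The only remaining points that need care are the \wstar-closedness of $E_\lm$ (handled by reflexivity and Krein--Smulian) and the normality of the extension of $P$, both of which are routine.
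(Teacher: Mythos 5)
Your proposal is correct and takes essentially the same route as the paper, which simply cites the calculations of Haagerup--Pisier \cite[\S1]{HaaPis_Duke} for the first two bullets and obtains the third by taking (pre-)adjoints. You have merely filled in the extraction that the paper delegates to the references --- the lower bound by evaluating at $\xi\otimes\delta_e$, Krein--Smulian plus reflexivity for the identification $E_\lambda=\VN(S)$, and the explicit completely bounded projection $u\circ v$ --- while importing the one genuinely hard ingredient (the operator-coefficient Haagerup inequality) from the same source; I see no gaps.
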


The first and second of these results can be extracted from the calculations in \cite[\S1]{HaaPis_Duke}; the third then follows by taking (pre-)adjoints. Proofs can also be found in \cite[\S9.7]{Pis_OSbook}.

\paragraph{\it Alternative proof of Theorem~\ref{t:non-cb}.}
Consider the map $h = \theta {\tili\ }^{-1} \phi : \FA(S) \to \fA$; this is a bounded algebra homomorphism, which cannot be completely bounded (since $\theta^{-1}$ and $\phi^{-1}$ are completely bounded, while ${\tili\ }^{-1}$ is not).
Now let $G$ be a locally compact group which contains a discrete, free, nonabelian subgroup. It is well known that this subgroup must, in turn, contain a copy of $\bbF_\infty$, and so (by Herz's restriction theorem) the restriction homo\-morphism $q_F:\FA(G) \to \FA(\bbF_\infty)$ is a quotient map of operator spaces. Hence $q_Sq_F: \FA(G) \to \FA(S)$ is a quotient map of operator spaces.

Therefore, as $h$ is not completely bounded, the bounded algebra homomorphism
\[ hq_Sq_F: \FA(G) \longrightarrow \fA \hookrightarrow \Bdd(\cH) \]
is also not completely bounded. From this, we can deduce -- using the same argument as in Section~\ref{s:non-cb_rep} -- that $hq_Sq_F$ is not similar to a $*$-representation.
\hfill$\Box$


\vfill
\begin{tabular}{l}
{\sl Address:}\\
Department of Mathematics and Statistics\\
McLean Hall\\
University of Saskatchewan\\
106 Wiggins Road, Saskatoon, SK\\
Saskatchewan, Canada S7N 5E6\\
{\sl Email:} {\tt choi@math.usask.ca} and {\tt samei@math.usask.ca}
\end{tabular}

\end{document}